\documentclass[11pt]{amsart}
\usepackage{amssymb,amsmath,mathtools}
	\mathtoolsset{centercolon}
\usepackage[utf8]{inputenc}
\usepackage[shortlabels]{enumitem}
	\setenumerate[0]{label={\rm (\roman*)}, leftmargin=*}
\usepackage{geometry}
\usepackage{hyperref}\hypersetup{bookmarksopen=true}
\usepackage[capitalize]{cleveref}

\numberwithin{equation}{section}

\newtheorem{theorem}[equation]{Theorem}
\newtheorem{proposition}[equation]{Proposition}
\newtheorem{lemma}[equation]{Lemma}

\theoremstyle{definition}
\newtheorem{definition}[equation]{Definition}

\theoremstyle{remark}

\newcommand{\g}{\mathfrak g}
\newcommand{\cg}{\overline{\g}}
\newcommand{\C}{\mathbb C}
\newcommand{\R}{\mathbb R}

\newcommand{\id}{\mathrm{id}}
\newcommand{\dash}{\nobreakdash-\hspace{0pt}}

\title[A 9-dimensional nilpotent Lie algebra]{A 9-dimensional nilpotent Lie algebra isomorphic to its complex conjugate admitting no real form}
\date{October 9, 2026}
\author{Tom De Medts}
\address{Tom De Medts \\ Ghent University \\ Department of Mathematics, Computer Science and Statistics \\ Krijgslaan 299, S9 \\ B-9000 Gent \\ Belgium}
\email{tom.demedts@ugent.be}

\begin{document}

\begin{abstract}
	We give an explicit example of a $9$-dimensional $5$-step nilpotent complex Lie algebra $\g$, generated by two elements, that is
	isomorphic to its complex conjugate but has no real form.
	In contrast to the earlier $10$-dimensional examples, our proof is completely elementary and very short.
\end{abstract}

\maketitle

\vspace*{-3.8ex}

\section{Introduction}

There is a subtle connection between complex Lie algebras that are isomorphic to their own complex conjugate, and complex Lie algebras admitting a real form.
It is clear that \emph{if} a complex Lie algebra admits a real form, \emph{then} it is isomorphic to its complex conjugate, simply because the former condition ensures the existence of a basis such that all structure constants are real.
It is tempting to think that the converse is also true; we refer to the MathOverflow discussion \cite{MO} containing two incorrect attempts for a proof. Notice, for instance, that the converse is indeed true for semisimple Lie algebras, and in \cite[p.\@~494]{Dere19}, Jonas Deré mentions that it is also true for all nilpotent complex Lie algebras up to dimension~$7$.

In \cite{Dere19}, Deré explicitly posed this question as a conjecture.
In \cite{Demarche26}, Cyril Demarche disproved the conjecture, by showing the existence of a $10$-dimensional $2$-step nilpotent counterexample; however, his proof is non-constructive.
Following his ideas, Mikhail Borovoi, Willem de Graaf and Robert Guralnick have recently provided an \emph{explicit} example of such a $10$-dimensional $2$-step nilpotent Lie algebra; see \cite{BdGG26}.
(They also show how to construct counterexamples of any dimension $d \geq 10$ from this.)
The construction is rather involved, and the proof relies on computer verifications in GAP and Magma.
By \cite[Remark 1.3]{BdGG26}, dimension $10$ is the smallest possible for a $2$-step nilpotent example.

\smallskip

We provide an example that is $9$-dimensional, $5$-step nilpotent, and generated by only $2$ elements.
Notice that the \emph{free} $5$-step nilpotent Lie algebra on $2$ generators has dimension~$14$, with homogeneous components of dimensions $2$, $1$, $2$, $3$ and $6$. Our example is (of course) a quotient of this algebra, and we have homogeneous components of dimensions $2$, $1$, $2$, $2$ and~$2$.
We do not know whether there exists a nilpotent $8$\dash dimensional complex Lie algebra isomorphic to its complex conjugate admitting no real form, although experiments with $2$-generated and $3$-generated nilpotent Lie algebras seem to suggest that it is unlikely.

\smallskip

In contrast to the earlier examples, our proof is completely elementary, very short, and can be verified directly by hand.

\section{Setup}

Let $\g$ be a finite-dimensional Lie algebra over $\C$ with basis $\{ b_1,\dots,b_n \}$ and structure constants $r_{ij}^k$, i.e., we have
\[ [b_i, b_j] = \sum_{k=1}^n r_{ij}^k b_k \]
for all $i,j$.
We then define the \emph{complex conjugate} $\cg$ of $\g$ to be the Lie algebra over $\C$ with basis $\{ b'_1,\dots, b'_n \}$, with Lie bracket
\begin{equation}\label{eq:conj}
	[b'_i, b'_j] = \sum_{k=1}^n \overline{r_{ij}^k} b'_k .
\end{equation}
This definition depends on the choice of the basis $\{ b_1,\dots,b_n \}$, but different choices give isomorphic algebras, so that ``$\g \cong \cg$'' is a well-defined condition.

We say that a complex Lie algebra $\g$ has a \emph{real form} $\g_0$ if $\g_0$ is a Lie algebra over $\R$ such that $\g \cong \g_0 \otimes_\R \C$, or in other words, if $\g$ admits a basis for which all structure constants $r_{ij}^k$ are real.

If $\g$ has a real form, then clearly $\g \cong \cg$.
The converse is not true, and our goal is precisely to give an elementary example of this phenomenon.

\begin{definition}
	Let $\g$ be a Lie algebra with a fixed basis $\{ b_1,\dots,b_n \}$ and let $\cg$ be its complex conjugate with basis $\{ b'_1,\dots,b'_n \}$.
	\begin{enumerate}
		\item For each $v = \sum_j \lambda_j b_j \in \g$, we define $\overline{v} := \sum_j \overline\lambda_j b_j' \in \cg$.
			Similarly, if $w = \sum_j \lambda_j b_j' \in \cg$, then we define $\overline{w} := \sum_j \overline\lambda_j b_j \in \g$.
			In particular, $\overline b_j = b'_j$ and $\overline{b'_j} = b_j$ for all $j$.
			
			Notice that the map $\g \to \cg \colon v \mapsto \overline{v}$ is $\C$-semilinear, i.e., $\overline{\lambda v} = \overline{\lambda} \overline{v}$ for all $\lambda \in \C$ and all $v \in \g$; moreover,
			\[ [\overline v, \overline w] = \overline{[v,w]} \quad \text{for all } v,w \in \g . \]
			Similar properties hold, of course, for the map $\cg \to \g \colon w \mapsto \overline{w}$.
		\item If $\psi \colon \g \to \cg$ is a $\C$-linear Lie algebra isomorphism, then we define $\overline\psi \colon \cg \to \g$ as the isomorphism given by conjugating the coordinates of $\psi$ with respect to our choice of bases. Equivalently, $\overline\psi \colon \cg \to \g$ is defined by
			\[ \overline\psi(\overline v) := \overline{\psi(v)} \quad \text{for all } v \in \g . \]
			Clearly, $\overline\psi$ is again a $\C$-linear Lie algebra isomorphism.
	\end{enumerate}
\end{definition}

The following criterion is useful.

\begin{lemma}\label{le:realform}
	The complex Lie algebra $\g$ has a real form if and only if there is a $\C$-linear isomorphism of Lie algebras $\psi \colon \g \to \cg$ with $\overline{\psi} \circ \psi = \id$.
\end{lemma}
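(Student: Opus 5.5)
The plan is to translate between $\C$-linear isomorphisms $\psi\colon\g\to\cg$ and $\C$-antilinear maps $\g\to\g$. Given such a $\psi$, define $\sigma\colon\g\to\g$ by $\sigma(v):=\overline{\psi(v)}$. Since $\psi$ is $\C$-linear and $w\mapsto\overline w$ is semilinear, $\sigma$ is $\C$-semilinear. Since both maps preserve brackets, $\sigma$ is a Lie algebra automorphism of the underlying real Lie algebra. The key computation is
\[ \sigma^2(v)=\overline{\psi\bigl(\overline{\psi(v)}\bigr)}=\overline\psi\bigl(\psi(v)\bigr), \]
which follows by applying the defining relation $\overline\psi(\overline u)=\overline{\psi(u)}$ with $u=\psi(v)$. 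Hence $\sigma^2=\id$ if and only if $\overline\psi\circ\psi=\id$. Conversely, every semilinear bracket-preserving bijection $\sigma$ of $\g$ yields a $\C$-linear isomorphism $\psi(v):=\overline{\sigma(v)}$ from $\g$ to $\cg$. So the lemma reduces to the statement that $\g$ has a real form if and only if it admits an antilinear involutive automorphism $\sigma$, i.e.\ a conjugation.

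For the ``if'' direction, suppose $\sigma$ is such an involution and set $\g_0:=\{v\in\g:\sigma(v)=v\}$. This is a real subspace, and it is closed under the bracket because $\sigma$ preserves brackets. Every $v\in\g$ decomposes as
\[ v=\tfrac12(v+\sigma v)+i\cdot\tfrac1{2i}(v-\sigma v), \]
and both parenthesised terms lie in $\g_0$, using that $\sigma(iw)=-i\sigma(w)$. Moreover $\g_0\cap i\g_0=0$, since $\sigma(iw)=-iw$ for $w\in\g_0$. Therefore $\g=\g_0\oplus i\g_0$, and the natural map $\g_0\otimes_\R\C\to\g$ is a Lie algebra isomorphism. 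Concretely, an $\R$-basis of $\g_0$ is a $\C$-basis of $\g$ with real structure constants.

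For the ``only if'' direction, suppose $\g\cong\g_0\otimes_\R\C$. Then there is a basis $c_1,\dots,c_n$ of $\g$ with real structure constants, and we define $\sigma(\sum\lambda_jc_j):=\sum\overline{\lambda_j}c_j$. This map is clearly semilinear and involutive. It preserves brackets precisely because the structure constants are real. Then $\psi:=\overline{\sigma(\cdot)}$ satisfies $\overline\psi\circ\psi=\sigma^2=\id$ by the computation above.

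The main obstacle is bookkeeping rather than mathematics. The bar maps are defined relative to the fixed basis $\{b_j\}$, whereas the real basis $\{c_j\}$ is different, so one must check that the identity $\sigma^2=\overline\psi\circ\psi$ uses only the intrinsic properties of the bar maps. Those properties are semilinearity, bracket compatibility, $\overline{\overline v}=v$, and the defining relation for $\overline\psi$, none of which depends on the choice of basis. I would verify that identity carefully first, since everything else is the standard correspondence between real forms and conjugations.
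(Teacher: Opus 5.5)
Your proof is correct and takes the paper's route: your ``only if'' direction produces exactly the paper's $\psi(f_i)=\overline{f_i}$, and your ``if'' direction is the paper's fixed-point argument for the semilinear involution $v\mapsto\overline{\psi(v)}$, with the details the paper omits filled in correctly. One small slip: in proving $\sigma^2=\overline\psi\circ\psi$, the substitution into $\overline\psi(\overline u)=\overline{\psi(u)}$ must be $u=\overline{\psi(v)}\in\g$ and not $u=\psi(v)\in\cg$, on which $\psi$ is not defined; after that substitution, $\overline{\overline w}=w$ gives the identity.
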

\begin{proof}
	If $\g$ has a real form, then we can find a new basis $\{ f_1, \dots, f_n \}$ for $\g$ with respect to which all structure constants $s_{ij}^k$ belong to $\R$.
	Then $\{ \overline f_1, \dots, \overline f_n \}$ is a basis for $\cg$, and we have
	\[ [\overline f_i, \overline f_j] = \overline{[f_i, f_j]} = \overline{\sum_k s_{ij}^k f_k} = \sum_k s_{ij}^k \overline f_k \]
	since $s_{ij}^k \in \R$. Hence the $\C$-linear map $\psi \colon \g \to \cg$ defined by $\psi(f_i) := \overline f_i$ for all $i$ is a Lie algebra isomorphism. By definition, we have $\overline{\psi}(\overline f_i) = \overline{\psi(f_i)} = \overline{\overline{f_i}} = f_i$, so indeed $\overline{\psi} \circ \psi = \id$.
	
	\medskip
	
	Conversely, assume that there is a $\C$-linear isomorphism of Lie algebras $\psi \colon \g \to \cg$ with $\overline{\psi} \circ \psi = \id$.
	Composing $\psi$ with the map $\cg \to \g \colon w \mapsto \overline{w}$, we obtain a $\C$-semilinear automorphism $\rho \colon \g \to \g$, and the condition $\overline\psi \circ \psi = \id$ ensures that $\rho$ is an involution.
	The $\R$-subspace of fixed points of $\g$ under $\rho$ is then a real form of $\g$.
	(We omit the details because we do not need this reverse implication.)
\end{proof}

\section{The example}

\begin{definition}\label{def}
Let $\g$ be the $9$-dimensional complex vector space with basis
\[ x_1,\ y_1,\quad x_2,\quad x_3,\ y_3,\quad x_4,\ y_4,\quad x_5,\ y_5 \]
and the antisymmetric bracket whose nonzero values on basis vectors are
\begin{alignat*}{4}
	&[x_1, y_1] = x_2, \\
	&[x_1, x_2] = x_3, && [y_1,x_2]=y_3, \\
	&[x_1, x_3] = x_4, && [y_1,y_3]=y_4, \\
	&[x_2, x_3] = x_5, && [x_2,y_3]=y_5, \\
	&[x_1, x_4] = x_5 + i y_5, \quad && [y_1, x_4] = -x_5, \quad && [x_1, y_4] = y_5, \quad && [y_1, y_4] = i x_5 + y_5.
\end{alignat*}
All other brackets of basis vectors are zero; in particular $[y_1,x_3]=[x_1,y_3]=0$.
\end{definition}

Now set $\g_1 := \langle x_1,y_1\rangle$, $\g_2 := \langle x_2\rangle$ and $\g_k := \langle x_k,y_k\rangle$ for $k=3,4,5$, and $\g_k=0$ for $k\geq 6$.
Then $[\g_j,\g_k]\subseteq\g_{j+k}$ for all $j,k$, i.e., the bracket is graded.

\begin{proposition}\label{pr:lie}
	Let $\g$ be the algebra from \cref{def}.
	Then:
	\begin{enumerate}
		\item $\g$ is a Lie algebra.
		\item $\g$ is generated by $x_1$ and $y_1$ and is $5$-step nilpotent.
		\item The lower central series of $\g$ is $\g^k := \bigoplus_{j=k}^5 \g_j$. In particular, $\g = \langle x_1, y_1 \rangle \oplus [\g, \g]$.
	\end{enumerate}
\end{proposition}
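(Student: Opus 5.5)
The plan is to reduce everything to the grading $\g=\bigoplus_{k=1}^5\g_k$ noted just before the statement. Most Jacobi checks then become trivial, and the three parts follow in order.

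\textbf{(i) Jacobi identity.} The bracket is bilinear and antisymmetric by definition, so it suffices to check
\[ J(u,v,w) := [u,[v,w]]+[v,[w,u]]+[w,[u,v]] = 0 \]
for basis vectors $u,v,w$. Two reductions cut this down to a handful of cases.
\begin{itemize}
\item $J$ is alternating, so it vanishes whenever two arguments coincide, and permuting the arguments only changes the sign.
\item Since the bracket is graded and $\g_k=0$ for $k\ge 6$, the element $J(u,v,w)$ lies in $\g_{a+b+c}$, where $a,b,c$ are the degrees of $u,v,w$. So it vanishes automatically unless $a+b+c\le 5$.
\end{itemize}
The degree multisets with sum at most $5$ are $\{1,1,1\}$, $\{1,1,2\}$, $\{1,1,3\}$ and $\{1,2,2\}$.

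\begin{itemize}
\item In the cases $\{1,1,1\}$ and $\{1,2,2\}$, some basis vector must repeat, because $\g_1$ has only two basis vectors and $\g_2$ has only one. So these cases are trivial.
\item Case $\{1,1,2\}$: the only triple is $(x_1,y_1,x_2)$, and
\[ J(x_1,y_1,x_2) = [x_1,y_3] - [y_1,x_3] + [x_2,x_2] = 0. \]
\item Case $\{1,1,3\}$: the triples are $(x_1,y_1,x_3)$ and $(x_1,y_1,y_3)$. Directly from the table,
\[ J(x_1,y_1,x_3) = 0 - [y_1,x_4] + [x_3,x_2] = x_5 - x_5 = 0, \]
\[ J(x_1,y_1,y_3) = [x_1,y_4] + 0 + [y_3,x_2] = y_5 - y_5 = 0. \]
\end{itemize}
I expect this to be the only step requiring care. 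It is mostly bookkeeping, to make sure the grading argument really discards every other case. Notably, the "twisted" degree-$5$ brackets $[x_1,x_4]$ and $[y_1,y_4]$, which involve $i$, never enter the Jacobi checks at all.

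\textbf{(ii) Generation and nilpotency.} The basis is recovered from $x_1,y_1$ by successive brackets:
\begin{itemize}
\item $x_2=[x_1,y_1]$;
\item $x_3=[x_1,x_2]$ and $y_3=[y_1,x_2]$;
\item $x_4=[x_1,x_3]$ and $y_4=[y_1,y_3]$;
\item $x_5=[x_2,x_3]$ and $y_5=[x_2,y_3]$.
\end{itemize}
Hence $x_1,y_1$ generate $\g$. Any $6$-fold bracket lands in $\bigoplus_{j\ge 6}\g_j=0$, whereas $x_5\neq 0$ is a $5$-fold bracket of generators. So $\g$ is exactly $5$-step nilpotent. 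In fact this also follows from (iii).

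\textbf{(iii) Lower central series.} I will prove $\g^k=\bigoplus_{j\ge k}\g_j$ by induction on $k$, starting from $\g^1=\g$.
\begin{itemize}
\item The inclusion $\subseteq$ is immediate from the grading.
\item For $\supseteq$, it suffices to show $[\g_1,\g_{k-1}]=\g_k$ for $2\le k\le 5$, since then
\[ \g^k=[\g,\g^{k-1}]\supseteq[\g_1,\g_{k-1}]+[\g_1,\textstyle\bigoplus_{j\ge k}\g_j] \supseteq \g_k\oplus\bigoplus_{j\ge k+1}\g_j, \]
where the last step uses the statement for higher degrees (a downward induction, or simply the same claim applied to each $j$).
\end{itemize}
The identity $[\g_1,\g_{k-1}]=\g_k$ is read off the table:
\begin{itemize}
\item $k=2,3,4$: the brackets displayed above for $x_2$, $x_3,y_3$ and $x_4,y_4$ already do it.
\item $k=5$: $[y_1,x_4]=-x_5$ and $[x_1,y_4]=y_5$ span $\g_5$.
\end{itemize}
In particular $[\g,\g]=\g^2=\bigoplus_{j\ge2}\g_j$, which gives $\g=\langle x_1,y_1\rangle\oplus[\g,\g]$.
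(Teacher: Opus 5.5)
Your proposal is correct and follows essentially the same route as the paper. Like the paper, you use the grading and the alternating property of $J$ to reduce the Jacobi identity to the three triples $(x_1,y_1,x_2)$, $(x_1,y_1,x_3)$, $(x_1,y_1,y_3)$, and then obtain the lower central series by induction from $[\g_1,\g_{k-1}]=\g_k$; you simply write out the case enumeration and the induction step in more detail.
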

\begin{proof}
	To prove that $\g$ is a Lie algebra, it suffices to check the Jacobi identity
	\[ J(a,b,c)=[a,[b,c]]+[b,[c,a]]+[c,[a,b]]=0 \]
	for basis vectors $a,b,c$.
	Because the bracket is graded and $\g_k=0$ for $k\ge6$, $J(a,b,c)=0$ whenever the degrees add up to at least $6$.
	We may also assume that $a,b,c$ are pairwise distinct, because $J$ is alternating.
	Since $\dim\g_1=2$ and $\dim\g_2=1$, the only remaining triples are $(x_1,y_1,x_2)$, $(x_1,y_1,x_3)$ and $(x_1,y_1,y_3)$, and we check that
	\begin{align*}
		J(x_1,y_1,x_2) &= [x_1,y_3] - [y_1,x_3] + [x_2,x_2] = 0, \\
		J(x_1,y_1,x_3) &= [x_1,0] - [y_1,x_4] + [x_3,x_2] = 0 + x_5 - x_5 = 0, \\
		J(x_1,y_1,y_3) &= [x_1,y_4] + [y_1,0] + [y_3,x_2]=y_5+0-y_5=0.
	\end{align*}
	This proves (i).
	
	It is immediately clear from \cref{def} that $\g$ is generated by $x_1$ and $y_1$.
	By construction, $\g_j = [\g_1, \g_{j-1}]$ for all $j \geq 2$, so since $\g$ is generated by $\g_1$, we see by induction on~$k$ that the lower central series of $\g$ is $\g^k := \bigoplus_{j=k}^5 \g_j$.
	In particular, $\g$ is $5$-step nilpotent. This proves (ii) and (iii).
\end{proof}

Let $\cg$ be the complex conjugate of $\g$ as defined in \eqref{eq:conj}, with respect to the basis from \cref{def}.
We denote the corresponding basis of $\cg$ by $\{ x'_1, \dots, y'_5 \}$.
Explicitly, $\cg$ has exactly the same Lie brackets of basis vectors as in \cref{def} except for
\[ [x'_1, x'_4] = x'_5 - i y'_5 \quad \text{and} \quad [y'_1, y'_4] = -i x'_5 + y'_5 . \]

\begin{proposition}\label{pr:tau}
	Let $\tau$ be the $\C$-linear map from $\g$ to $\cg$ given on the basis by
	\[ \tau(x_2) = x'_2, \qquad \tau(x_k) = y'_k \quad \text{and} \quad \tau(y_k) = (-1)^k x'_k \qquad (k=1,3,4,5) .\]
	Then $\tau$ is an isomorphism of Lie algebras. In particular, $\g \cong \cg$.
\end{proposition}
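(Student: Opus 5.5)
Since $\tau$ sends the basis of $\g$ bijectively (up to signs) onto the basis of $\cg$, it is a $\C$-linear bijection. It therefore suffices to check $\tau([a,b]) = [\tau(a),\tau(b)]$ for all pairs of basis vectors $a,b$ of $\g$. Note that $\tau$ preserves the grading: $\tau(\g_k) = \cg_k$, where $\cg_k$ is spanned by the primed basis vectors of degree $k$.

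The plan is to go through the table of \cref{def} degree by degree. Every bracket of basis vectors not listed there, nor obtained from a listed one by antisymmetry, is zero in both $\g$ and $\cg$. For such pairs I need $\tau(a),\tau(b)$ to be a pair whose bracket in $\cg$ also vanishes. Since $\tau$ essentially swaps $x_k \leftrightarrow y_k$ for $k \neq 2$, I will check that the swap maps the set of nonzero-bracket pairs onto itself.

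The nonzero-bracket pairs are $\{x_1,y_1\}$, $\{x_1,x_2\}$, $\{y_1,x_2\}$, $\{x_1,x_3\}$, $\{y_1,y_3\}$, $\{x_2,x_3\}$, $\{x_2,y_3\}$, together with all four pairs between $\{x_1,y_1\}$ and $\{x_4,y_4\}$. This set is visibly invariant under the swap, so the vanishing brackets are preserved. In particular $[x_1,y_3]=0$ and $[y_1,x_3]=0$ go to $[y'_1,x'_3]$ and $[x'_1,y'_3]$, which are both zero.

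It remains to verify the eleven nonzero relations one at a time. For example,
\[ \tau([x_1,y_1]) = \tau(x_2) = x'_2, \qquad [\tau x_1,\tau y_1] = [y'_1,-x'_1] = x'_2. \]
Similarly,
\[ [\tau x_1,\tau x_2] = [y'_1,x'_2] = y'_3 = \tau(x_3), \qquad [\tau y_1,\tau x_2] = -[x'_1,x'_2] = -x'_3 = \tau(y_3). \]
The relations in degrees $4$ and $5$ coming from $\g_3$ and $\g_2+\g_3$ are checked the same way, keeping track of the signs $(-1)^k$.

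The main obstacle is the four relations $[\g_1,\g_4]$. These are the only places where $i$ occurs, and hence the only places where passing to $\cg$ matters. Take $[x_1,x_4]=x_5+iy_5$. Applying $\tau$ gives $y'_5 - i x'_5$. On the other side, $[\tau x_1,\tau x_4] = [y'_1,y'_4] = -ix'_5 + y'_5$, which agrees; this agreement is exactly where the conjugated bracket in $\cg$ is needed. I will do the remaining three relations in the same way:
\[ [y_1,x_4]=-x_5 \;\mapsto\; [-x'_1,y'_4] = -y'_5 = \tau(-x_5), \]
\[ [x_1,y_4]=y_5 \;\mapsto\; [y'_1,x'_4] = -x'_5 = \tau(y_5), \]
\[ [y_1,y_4] = ix_5+y_5 \;\mapsto\; [-x'_1,x'_4] = -x'_5+iy'_5 = \tau(ix_5+y_5). \]
Once all relations are verified, $\tau$ is a Lie algebra isomorphism, and in particular $\g\cong\cg$.
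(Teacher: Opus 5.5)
Your proposal is correct and takes the same approach as the paper. The paper likewise checks $\tau([a,b])=[\tau(a),\tau(b)]$ on the eleven nonzero brackets of \cref{def} and checks that vanishing brackets of basis vectors go to vanishing brackets; your invariance argument under the swap $x_k\leftrightarrow y_k$ ($k\neq 2$) handles the second part cleanly, and the degree-$4$ and degree-$5$ relations you left to be checked ``the same way'' do hold with the signs $(-1)^k$.
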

\begin{proof}
	To show that $\tau$ is an isomorphism of Lie algebras, it suffices to check that $\tau([a,b]) = [\tau(a), \tau(b)]$ for all basis vectors $a,b$ of $\g$, where the Lie bracket on the left hand side is in~$\g$ and the Lie bracket on the right hand side is in $\cg$.
	It suffices to first go over each of the $11$ non-zero brackets appearing in \cref{def} and then to check that $[\tau(a), \tau(b)] = 0$ whenever $[a,b] = 0$; the verification is straightforward.
\end{proof}

\section{Automorphisms of $\g$ act on $\g/[\g,\g]$ by scalars}

In this section, which is the key ingredient of our proof, we will show that for any $\C$\dash linear automorphism of the Lie algebra $\g$, the induced action on $\g / [\g, \g]$ is given by multiplication by a scalar.

\smallskip

So let $\varphi \colon \g \to \g$ be an arbitrary $\C$-linear Lie algebra automorphism.
 Since $\g$ is generated by $x_1$ and $y_1$ and $\g = \langle x_1, y_1 \rangle \oplus [\g, \g]$, the automorphism $\varphi$ is completely determined by
\begin{alignat*}{2}
	\varphi(x_1) &= X + v, \qquad & X &= \alpha x_1 + \beta y_1, \quad v \in [\g, \g], \\
	\varphi(y_1) &= Y + w, & Y &= \gamma x_1 + \delta y_1, \quad w \in [\g, \g],
\end{alignat*}
where $\alpha,\beta,\gamma,\delta \in \C$.
Notice that $\Delta := \alpha\delta - \beta\gamma \neq 0$ because $\varphi$ induces an automorphism of $\g / [\g, \g]$.

\begin{lemma}\label{le:aut level4}
	Either $\beta=\gamma=0$ or $\alpha=\delta=0$.
\end{lemma}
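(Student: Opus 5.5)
The plan is to read off constraints on $\alpha,\beta,\gamma,\delta$ from the two vanishing brackets $[y_1,x_3]=0$ and $[x_1,y_3]=0$. Applying $\varphi$ to them and looking only modulo $\g^5$, i.e.\ in degree $4$, should give $\alpha\gamma=\beta\delta=0$. Together with $\Delta\neq0$ this gives the dichotomy.

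\textbf{Step 1: leading terms.} By \cref{pr:lie}(iii), $\varphi$ preserves each term $\g^k=\bigoplus_{j\ge k}\g_j$ of the lower central series. The bracket is graded, so $[\g^j,\g^k]\subseteq\g^{j+k}$. Modulo higher terms I will compute the images of $x_2,x_3,y_3$:
\begin{itemize}
\item $\varphi(x_2)=[X+v,Y+w]\equiv[X,Y]=\Delta x_2 \pmod{\g^3}$;
\item $\varphi(x_3)=[\varphi(x_1),\varphi(x_2)]\equiv[X,\Delta x_2]=\Delta(\alpha x_3+\beta y_3)\pmod{\g^4}$;
\item $\varphi(y_3)=[\varphi(y_1),\varphi(x_2)]\equiv\Delta(\gamma x_3+\delta y_3)\pmod{\g^4}$.
\end{itemize}
Each congruence holds because the correction terms ($v,w\in\g^2$, or the $\g^3$-part of $\varphi(x_2)$) only contribute brackets in the next term of the filtration.

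\textbf{Step 2: apply $\varphi$ to the vanishing brackets.} Since $[y_1,x_3]=0$, we have $0=[\varphi(y_1),\varphi(x_3)]$. Modulo $\g^5$ the corrections are harmless: $w\in\g^2$ bracketed with $\g^3$ lands in $\g^5$, and $Y$ bracketed with $\g^4$ lands in $\g^5$. Using the level-$4$ brackets in \cref{def}, this gives
\[ 0\equiv\Delta[\gamma x_1+\delta y_1,\ \alpha x_3+\beta y_3]=\Delta(\alpha\gamma\, x_4+\beta\delta\, y_4)\pmod{\g^5}. \]
Here I used $[x_1,x_3]=x_4$, $[y_1,y_3]=y_4$ and $[x_1,y_3]=[y_1,x_3]=0$. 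Since $\Delta\ne0$ and $x_4,y_4$ are independent modulo $\g^5$, we get $\alpha\gamma=0$ and $\beta\delta=0$. The relation $[x_1,y_3]=0$ gives the same equations, so one relation already suffices.

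\textbf{Step 3: case split using $\Delta\ne0$.} If $\alpha=0$, then $\Delta=-\beta\gamma\neq0$ forces $\beta,\gamma\neq0$, and then $\beta\delta=0$ gives $\delta=0$. Hence $\alpha=\delta=0$. If $\alpha\neq0$, then $\alpha\gamma=0$ gives $\gamma=0$, so $\Delta=\alpha\delta\neq0$ gives $\delta\ne0$, and $\beta\delta=0$ gives $\beta=0$. Hence $\beta=\gamma=0$.

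The only point needing care is the filtration bookkeeping in Steps 1--2, namely that the non-leading parts $v,w$ and the higher components of $\varphi(x_2),\varphi(x_3)$ cannot contribute in degree $4$. This follows directly from the gradedness $[\g_j,\g_k]\subseteq\g_{j+k}$, so I expect no real obstacle.
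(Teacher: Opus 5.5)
Your proposal is correct and follows essentially the same route as the paper: apply $\varphi$ to the relation $[y_1,x_3]=[y_1,[x_1,[x_1,y_1]]]=0$, extract the degree-$4$ component $\Delta(\alpha\gamma\,x_4+\beta\delta\,y_4)$, and conclude with $\Delta\neq0$. The only difference is that you spell out the filtration bookkeeping and the final case split, which the paper compresses into ``by the grading'' and ``the result follows.''
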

\begin{proof}
	We have $[y_1,[x_1,[x_1,y_1]]]=[y_1,x_3]=0$. Applying $\varphi$ gives
	\[ [\varphi(y_1), [\varphi(x_1), [\varphi(x_1),\varphi(y_1)]]] = 0, \]
	so by the grading, we also have $[Y,[X,[X,Y]]]=0$. On the other hand, using
	$[y_1,x_3]=[x_1,y_3]=0$, $[x_1,x_3]=x_4$ and $[y_1,y_3]=y_4$, we compute
	\[
		[X,Y] = \Delta x_2,\qquad
		[X,[X,Y]] = \Delta(\alpha x_3 + \beta y_3),\qquad
		[Y,[X,[X,Y]]] = \Delta(\alpha\gamma\,x_4 + \beta\delta\,y_4).
	\]
	Hence $\alpha\gamma = \beta\delta = 0$.
	As $\Delta = \alpha\delta - \beta\gamma \neq 0$, the result follows.
\end{proof}

\begin{lemma}\label{le:aut level5}
	We have $\beta = \gamma = 0$ and $\alpha = \delta \neq 0$.
\end{lemma}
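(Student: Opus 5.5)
The plan is to refine \cref{le:aut level4} by applying $\varphi$ to a relation of degree~$5$ and keeping only the degree-$5$ component, which by the grading depends only on $X$ and $Y$. As in \cref{le:aut level4}, the corrections $v,w \in [\g,\g]$ can only contribute terms of degree at least $6$, which vanish. For the relation I will use the defining bracket $[x_1,x_4] = x_5 + i y_5$, rewritten through iterated brackets of $x_1,y_1$:
\[ [x_1,[x_1,x_3]] = [x_2,x_3] + i\,[x_2,y_3], \qquad x_2=[x_1,y_1],\ x_3=[x_1,x_2],\ y_3=[y_1,x_2]. \]
Applying $\varphi$ and projecting to $\g_5$ turns this into the same identity with $x_1,y_1$ replaced by $X,Y$. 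By \cref{le:aut level4} I then split into two cases.

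\emph{Diagonal case} ($\beta=\gamma=0$, so $\Delta=\alpha\delta$). I will compute $[X,Y]=\Delta x_2$, $[X,[X,Y]]=\alpha\Delta x_3$, $[Y,[X,Y]]\mapsto \delta\Delta y_3$ and $[X,[X,[X,Y]]]=\alpha^2\Delta x_4$. Substituting into the relation gives
\[ \alpha^3\Delta(x_5+iy_5) = \alpha\Delta^2 x_5 + i\delta\Delta^2 y_5 . \]
The $x_5$-coefficient yields $\alpha^2=\Delta=\alpha\delta$, hence $\alpha=\delta\neq0$, which is the claim.

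\emph{Antidiagonal case} ($\alpha=\delta=0$, so $\Delta=-\beta\gamma$). Here the images are
\[ [X,[X,Y]]=\beta\Delta y_3, \quad [Y,[X,Y]]=\gamma\Delta x_3, \quad [X,[X,[X,Y]]]=\beta^2\Delta y_4 . \]
The left side becomes $\beta^3\Delta[y_1,y_4]=\beta^3\Delta(ix_5+y_5)$. The right side becomes $[\Delta x_2,\beta\Delta y_3]+i[\Delta x_2,\gamma\Delta x_3]=\beta\Delta^2 y_5+i\gamma\Delta^2 x_5$. The $y_5$-coefficient gives $\beta^2=\Delta$, and the $x_5$-coefficient gives $\beta^3=\gamma\Delta=\gamma\beta^2$, so $\beta=\gamma$. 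Then $\Delta=-\beta^2=\beta^2$ forces $\beta=0$, contradicting $\Delta\neq0$. So this case cannot occur.

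The only real work is choosing a degree-$5$ relation that distinguishes the twisted brackets $x_5+iy_5$ and $ix_5+y_5$; the relation above does this. The rest is careful bookkeeping of which of the eleven nonzero brackets enter, in particular using $[y_1,x_3]=[x_1,y_3]=0$. I expect the sign and coefficient tracking in the antidiagonal case to be the main point needing care. If that computation unexpectedly failed to give a contradiction, I would additionally use the companion relation coming from $[y_1,y_4]=ix_5+y_5$.
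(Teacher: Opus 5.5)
Your proposal is correct and takes the same route as the paper. You use the same degree-$5$ relation $[x_1,x_4]=x_5+iy_5$, rewritten via $x_5=[x_2,x_3]$ and $y_5=[x_2,y_3]$, the same case split from \cref{le:aut level4}, and the same coefficient comparisons; your $\beta=\gamma$ in the antidiagonal case is just a different substitution of the same two equations that give the paper's $\beta^2=-\gamma^2$.
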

\begin{proof}
	We have
	\begin{multline*}
		[x_1,[x_1,[x_1,[x_1,y_1]]]] = [x_1,x_4] = x_5 + i y_5, \quad \text{with} \\
		x_5=[[x_1,y_1],[x_1,[x_1,y_1]]] \quad \text{and} \quad
			y_5=[[x_1,y_1],[y_1,[x_1,y_1]]].
	\end{multline*}
	Applying $\varphi$ and using the grading, we get
	\begin{equation}\label{eq:deg5}
		[X,[X,[X,[X,Y]]]] = [[X,Y],[X,[X,Y]]] + i\,[[X,Y],[Y,[X,Y]]].
	\end{equation}
	We now distinguish between the two different cases from \cref{le:aut level4}.
	
	Assume first that $\beta=\gamma=0$, so we have $X=\alpha x_1$ and $Y=\delta y_1$. Then each term of~\eqref{eq:deg5} is a scalar multiple of the
	corresponding bracket of $x_1$ and $y_1$, so \eqref{eq:deg5} becomes
	\[ \alpha^4\delta (x_5 + i y_5) = \alpha^3 \delta^2 x_5 + i \alpha^2\delta^3 y_5 . \]
	Comparing coefficients of $x_5$ gives $\alpha^4\delta=\alpha^3\delta^2$, i.e., $\alpha=\delta$ (because $\alpha \neq 0$ and $\delta \neq 0$).
	
	Assume next that $\alpha=\delta=0$, so we have $X=\beta y_1$ and $Y=\gamma x_1$. Then
	\begin{gather*}
		[X,Y] = -\beta\gamma x_2, \qquad
		[X,[X,Y]] = -\beta^2\gamma y_3,\qquad
		[Y,[X,Y]] = -\beta\gamma^2 x_3, \\
		[X,[X,[X,[X,Y]]]] = -\beta^4\gamma [y_1,y_4] = -\beta^4\gamma (i x_5 + y_5),
	\end{gather*}
	while the right-hand side of \eqref{eq:deg5} equals
	\[ \beta^3\gamma^2 [x_2,y_3] + i\beta^2\gamma^3[x_2,x_3] = \beta^3\gamma^2 y_5 + i \beta^2\gamma^3 x_5 . \]
	Comparing coefficients of $y_5$ and of $x_5$ gives
	\[ -\beta^4\gamma = \beta^3 \gamma^2 \quad \text{and} \quad -\beta^4\gamma = \beta^2 \gamma^3, \]
	i.e., $\beta=-\gamma$ and $\beta^2=-\gamma^2$ (because $\beta \neq 0$ and $\gamma \neq 0$), a contradiction.
\end{proof}

\begin{proposition}\label{pr:key}
Every automorphism $\varphi$ of $\g$ induces a scalar multiple of the identity on $\g/[\g,\g]$.
\end{proposition}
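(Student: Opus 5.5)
This is an immediate consequence of \cref{le:aut level5}, once we identify the induced map on $\g/[\g,\g]$ concretely. By \cref{pr:lie}(iii) we have $\g = \langle x_1, y_1\rangle \oplus [\g,\g]$, so the images $\overline{x}_1 := x_1 + [\g,\g]$ and $\overline{y}_1 := y_1 + [\g,\g]$ form a basis of the $2$-dimensional quotient $\g/[\g,\g]$. Every automorphism $\varphi$ preserves the derived subalgebra $[\g,\g]$, since $\varphi([\g,\g]) = [\varphi(\g),\varphi(\g)] = [\g,\g]$. Hence $\varphi$ induces a well-defined linear map $\widetilde\varphi$ on $\g/[\g,\g]$.

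Next we compute $\widetilde\varphi$ in the basis $\{\overline x_1, \overline y_1\}$. With the notation fixed before \cref{le:aut level4}, we have $\varphi(x_1) = \alpha x_1 + \beta y_1 + v$ and $\varphi(y_1) = \gamma x_1 + \delta y_1 + w$, where $v,w \in [\g,\g]$. Reducing modulo $[\g,\g]$ gives
\[ \widetilde\varphi(\overline x_1) = \alpha \overline x_1 + \beta \overline y_1, \qquad \widetilde\varphi(\overline y_1) = \gamma \overline x_1 + \delta \overline y_1 . \]
So $\widetilde\varphi$ is represented by the matrix with entries $\alpha,\beta,\gamma,\delta$.

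By \cref{le:aut level5}, $\beta = \gamma = 0$ and $\alpha = \delta \neq 0$. Therefore $\widetilde\varphi = \alpha\cdot\id$, with $\alpha \neq 0$, which is exactly the claim.

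There is no real obstacle here. All the substance lies in \cref{le:aut level4,le:aut level5}, which were already proved by comparing the images of the degree-$4$ relation $[y_1,x_3]=0$ and the degree-$5$ relation $[x_1,x_4] = x_5 + iy_5$. The only points to check are that $[\g,\g]$ is $\varphi$-invariant and that $\overline x_1, \overline y_1$ really form a basis of the quotient; both follow directly from \cref{pr:lie}.
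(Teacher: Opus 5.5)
Your proposal is correct and follows the paper's argument: the paper says the proposition is simply a restatement of \cref{le:aut level5}, and you spell out the routine points behind that, namely that $[\g,\g]$ is $\varphi$-invariant and that the induced map on $\g/[\g,\g]$ has matrix entries $\alpha,\beta,\gamma,\delta$ in the basis of cosets of $x_1,y_1$. As a cosmetic remark, avoid writing $\overline{x}_1$ for the coset $x_1+[\g,\g]$, since the paper reserves the bar for the semilinear map $\g\to\cg$.
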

\begin{proof}
	This is simply a restatement of \cref{le:aut level5}.
\end{proof}

\section{The Lie algebra $\g$ has no real form}

We are now ready to prove our main result.

\begin{theorem}\label{thm:main}
	The $9$-dimensional nilpotent complex Lie algebra $\g$ from Definition~\ref{def} is isomorphic to its conjugate $\cg$, but $\g$ has no real form.
\end{theorem}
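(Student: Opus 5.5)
The isomorphism $\g \cong \cg$ is \cref{pr:tau}, so only the non-existence of a real form needs proof. By \cref{le:realform} it suffices to show that every $\C$-linear Lie algebra isomorphism $\psi \colon \g \to \cg$ satisfies $\overline\psi \circ \psi \neq \id$. The plan is to reduce everything to automorphisms of $\g$, using $\tau$ from \cref{pr:tau}, and then apply \cref{pr:key}.

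Let $\psi \colon \g \to \cg$ be any isomorphism. Then $\varphi := \tau^{-1} \circ \psi$ is an automorphism of $\g$, so $\psi = \tau \circ \varphi$. By \cref{pr:key}, $\varphi$ acts on $\g/[\g,\g]$ as a scalar $\lambda \in \C^\times$. Next I compute $\overline\psi$. From $\overline\psi(\overline v) = \overline{\psi(v)}$ one checks directly that $\overline{\tau\circ\varphi} = \overline\tau \circ \overline\varphi$. Here $\overline\varphi \colon \cg \to \cg$ is the automorphism with complex-conjugated matrix, and $\overline\tau \colon \cg \to \g$ is given by the same formulas as $\tau$ with primes moved, since $\tau$ has real coefficients. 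Hence
\[ \overline\psi \circ \psi = \overline\tau \circ \overline\varphi \circ \tau \circ \varphi . \]

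Since all these maps preserve derived algebras, I pass to the quotients $\g/[\g,\g]$ with basis images of $x_1, y_1$, and $\cg/[\cg,\cg]$ with basis images of $x'_1, y'_1$. On these quotients:
\begin{itemize}
\item $\varphi$ acts as $\lambda$, and $\overline\varphi$ acts as $\overline\lambda$, because conjugating coordinates conjugates the scalar;
\item $\tau$ acts by $x_1 \mapsto y'_1$ and $y_1 \mapsto -x'_1$;
\item $\overline\tau$ acts by $x'_1 \mapsto y_1$ and $y'_1 \mapsto -x_1$.
\end{itemize}
The scalars commute with everything, so $\overline\psi\circ\psi$ acts as $|\lambda|^2 \, \overline\tau\tau$. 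Now $\overline\tau\tau(x_1) = \overline\tau(y'_1) = -x_1$, so the induced map is $-|\lambda|^2 \id$. This is never the identity, since $-|\lambda|^2 < 0$. Therefore $\overline\psi\circ\psi \neq \id$, and \cref{le:realform} gives the conclusion.

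The main point needing care is the bookkeeping of $\overline{\,\cdot\,}$ applied to compositions and to automorphisms of $\cg$. One must confirm that $\overline{\tau\circ\varphi} = \overline\tau\circ\overline\varphi$, and that $\overline\varphi$ induces $\overline\lambda$ on the abelianization. Both follow from the semilinear description $\overline\psi(\overline v) = \overline{\psi(v)}$: for instance, $\overline\varphi(\overline{x_1}) = \overline{\lambda x_1 + \dots} = \overline\lambda x'_1 + \dots$. Everything else is a direct computation on a $2$-dimensional space.
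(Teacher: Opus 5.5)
Your proposal is correct and follows essentially the same route as the paper: write $\psi=\tau\circ\varphi$, use \cref{pr:key} to get a scalar $\lambda$ on $\g/[\g,\g]$, and observe that $\overline\psi\circ\psi$ then induces $-|\lambda|^2\,\id\neq\id$ on the abelianization. The only difference is cosmetic: you factor $\overline{\tau\circ\varphi}=\overline\tau\circ\overline\varphi$ and compose the induced maps on the quotients, whereas the paper computes $\psi(x_1)$ and $\overline\psi(y_1')$ directly modulo the derived algebras.
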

\begin{proof}
	Let $\tau \colon \g \to \cg$ be the isomorphism from \cref{pr:tau}.
	Suppose that $\g$ has a real form, and let $\psi \colon \g \to \cg$ be as in \cref{le:realform}.
	Then $\varphi := \tau^{-1} \circ \psi$ is an automorphism of $\g$.
	By \cref{pr:key}, $\varphi$ acts as scalar multiplication by $\lambda$ on $\g / [\g, \g]$ for some $\lambda \in \C^\times$.
	Then
	\[ \varphi(x_1) = \lambda x_1 + v \quad \text{and} \quad \varphi(y_1) = \lambda y_1 + w \]
	for some $v,w \in [\g, \g]$, hence
	\[ \psi(x_1) = \tau(\varphi(x_1)) = \lambda \tau(x_1) + \tau(v) = \lambda y'_1 + \tau(v) \quad \text{with } \tau(v) \in [\cg, \cg] . \]
	Also, by the definition of $\psi$, we have
	\[ \overline\psi(y_1') = \overline{\psi(y_1)} = \overline{\tau(\varphi(y_1))} = \overline{\lambda \tau(y_1) + \tau(w)}
		= \overline{- \lambda x'_1 + \tau(w)} = - \overline\lambda x_1 + \overline{\tau(w)} , \]
	with $\overline{\tau(w)} \in [\g, \g]$. However, by \cref{le:realform}, we have $\overline\psi \circ \psi = \id$, hence
	\[ x_1 = (\overline\psi \circ \psi)(x_1) = -\lambda \overline{\lambda} x_1 + z \quad \text{with } z \in [\g, \g] , \]
	a contradiction because $- \lambda \overline{\lambda} = - \lvert \lambda \rvert^2 \neq 1$ and $\g = \langle x_1, y_1 \rangle \oplus [\g, \g]$.
\end{proof}

\section{AI declaration}

We have been assisted by Claude Opus 5.5 by Anthropic for the construction and discussion of our example. We have not used AI to write down the results.

\bibliographystyle{alphaurl}
\bibliography{Lie9dim.bib}

\end{document}